\newtheorem{theorem}{Theorem}
\newtheorem{lemma}[theorem]{Lemma}
\def\paren#1{\left( #1 \right)}
\def\acc#1{\left\{ #1 \right\}}
\renewcommand{\le}{\leqslant}
\renewcommand{\ge}{\geqslant}
\title{Avoiding large squares in trees and planar graphs}
\author{
Daniel Gon\c{c}alves, Pascal Ochem, Matthieu Rosenfeld\\
\small LIRMM, CNRS\\
\small Universit\'e de Montpellier\\
\small France\\
}
\begin{document}

\maketitle
\setcounter{footnote}{0}
\begin{abstract}
The Thue number $\pi(G)$ of a graph $G$ is the minimum number of colors needed to color $G$ without creating a square on a path of $G$.
For a graph class $C$, $\pi(C)$ is the supremum of $\pi(G)$ over the graphs $G\in C$.
The Thue number has been investigated for famous minor-closed classes: $\pi(tree)=4$, $7\le\pi(outerplanar)\le12$, and $11\le\pi(planar)\le768$.
Following a suggestion of Grytczuk, we consider the generalized parameters $\pi_k(C)$ such that only squares of period at least $k$ must be avoided. Thus, $\pi(C)=\pi_1(C)$.
We show that $\pi_5(tree)=2$, $\pi_2(tree)=3$, and $\pi_k(planar)\ge11$ for every fixed $k$.
\end{abstract}

\section{Introduction}\label{sec:intro}
A coloring of a graph $G$ is non-repetitive if the sequence induced by the colors of any path of $G$ is not a square.
The Thue number $\pi(G)$ of $G$ is the minimum number of colors needed in a non-repetitive coloring $G$.
Recall that the period of a square $uu$ is $|u|$.

For a graph class $C$, $\pi(C)$ is the supremum of $\pi(G)$ over the graphs $G\in C$.
Let $tw_k$ denote the class of graphs with treewidth at most $k$.
\begin{theorem}\label{thm:nonr}{\ }
\begin{itemize}
 \item $\pi(path)=3$~\cite{Thue06}
 \item $\pi(tree)=4$~\cite{4ColTree}
 \item $7\le\pi(outerplanar)\le12$~\cite{bv:2007}
 \item $\pi(tw_k)\le4^k$~\cite{kp:2008}
 \item $11\le\pi(planar\cap tw_3)\le\pi(planar)\le768$~\cite{DFJW,DEJWW}
\end{itemize}
\end{theorem}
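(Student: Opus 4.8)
The plan is to treat \Cref{thm:nonr} as a digest of existing results and prove each bullet separately, reconstructing the classical ones and citing the technical ones with a sketch of the method. For $\pi(\mathrm{path})=3$ the lower bound is a finite check: the only square-free binary words are $\varepsilon,0,1,01,10,010,101$, so every path on at least four vertices fails to be $2$-colourable without a square (already $P_4$ does, while the colouring $0,1,2,0$ shows that three colours suffice on it). For the upper bound I would exhibit an infinite square-free word over three letters --- Thue's construction~\cite{Thue06} --- for instance the word obtained from the overlap-free Thue--Morse sequence by recording the lengths of its maximal blocks, or the fixed point of a suitable ternary square-free morphism, whose square-freeness one verifies by a short induction on factor occurrences; since every path of a path-graph is a factor of such a word, $\pi(\mathrm{path})\le3$.

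For $\pi(\mathrm{tree})=4$ I would use the recursive colouring of~\cite{4ColTree} for the upper bound: after rooting $T$, any path is the concatenation of two monotone root-directed paths meeting at their topmost vertex, so it suffices to colour so that no monotone path is half of a square; a product of a square-free ternary sequence read down the levels with a small local palette achieves this, and the construction can be tuned down to four colours. The lower bound asks only for a single tree with $\pi(T)\ge4$, and I would look for it among complete binary trees of large depth, arguing that with three colours two incomparable root-directed paths through a common vertex are forced to induce a square.

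The remaining bounds --- $7\le\pi(\mathrm{outerplanar})\le12$, $\pi(tw_k)\le4^k$, and $11\le\pi(\mathrm{planar}\cap tw_3)\le\pi(\mathrm{planar})\le768$ --- I would take from~\cite{bv:2007}, \cite{kp:2008}, and~\cite{DFJW,DEJWW} respectively. The treewidth bound is the key reusable ingredient: give each vertex a $k$-tuple whose $i$-th coordinate is read from a Thue-type quaternary sequence along the $i$-th ``branch'' of a tree decomposition, so that any path, which crosses the separators of the decomposition in a controlled pattern, is non-repetitive in at least one coordinate, yielding $4^k$ colours. The planar upper bound then combines a version of this with the product structure of planar graphs.

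The step I expect to be the main obstacle is the planar upper bound $\pi(\mathrm{planar})\le768$, which is not a short argument but rests on the product structure theorem (every planar graph is contained in the strong product of a bounded-treewidth graph with a path) together with a careful colour count across the product, essentially re-deriving~\cite{DEJWW}. The explicit lower bounds $\pi(\mathrm{outerplanar})\ge7$ and $\pi(\mathrm{planar}\cap tw_3)\ge11$ are the other delicate points, since each needs a concrete gadget together with a proof that no colouring with one fewer colour avoids every square on some path; I would import these verbatim from~\cite{bv:2007} and~\cite{DFJW}.
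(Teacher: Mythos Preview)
Your proposal is not wrong, but it misreads the role of \Cref{thm:nonr} in the paper: the paper gives no proof of this theorem at all. It is stated in the introduction purely as a survey of known results, with each item attributed to the cited reference and nothing more. There is therefore no ``paper's own proof'' to compare your attempt against.

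Your plan to reconstruct the classical arguments and sketch the technical ones is sound as an independent exercise (the $\pi(\mathrm{path})=3$ argument is correct; the $\pi(\mathrm{tree})=4$ sketch is a bit vague on the upper bound but points in the right direction; the citations for the remaining items match the paper's), but it goes well beyond what the paper does. If your goal is to match the paper, a single sentence stating that each bound is established in the corresponding reference would suffice.
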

Other types of coloring, namely proper coloring and star coloring~\cite{ACKKR04},
are defined by forbidding only squares of period 1 and squares of period 1 and~2, respectively.
The corresponding chromatic numbers $\chi$ and $\chi_s$ thus satisfy
$\chi(C)\le\chi_s(C)\le\pi(C)$ for every graph class $C$.

This paper investigates another variation of non-repetitive coloring,
suggested by Grytczuk, such that only large enough squares are forbidden.
The parameter $\pi_k(G)$ is the minimum number of colors needed to
color $G$ such that no squares of period at least $k$ appears in $G$.
We similarly define $\pi_k(C)$ for a graph class $C$, so that
$\pi(C)=\pi_1(C)\ge\pi_2(C)\ge\pi_3(C)\ge\cdots$

The case of words, i.e. infinite paths, is already settled
\begin{theorem}\label{thm:path}{\ }
\begin{itemize}
 \item $\pi_k(path)=3$ for $1\le k\le2$~\cite{Thue06}
 \item $\pi_k(path)=2$ for $k\ge3$~\cite{EJS1974}
\end{itemize}
\end{theorem}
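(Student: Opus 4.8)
The plan is to reduce both lines of the theorem to three facts about the class $path$. Write $\pi_k=\pi_k(path)$. Since the sequence $\pi_1\ge\pi_2\ge\pi_3\ge\cdots$ is non-increasing, and since a monochromatic path on $2k$ vertices induces the word $c^{2k}=(c^k)(c^k)$, which is itself a square of period $k$ (so $\pi_k\ge2$ for every $k\ge1$), it is enough to prove \emph{(i)} $\pi_2\ge3$, \emph{(ii)} $\pi_1\le3$, and \emph{(iii)} $\pi_3\le2$. Indeed (ii) gives $\pi_k\le3$ for all $k$, which together with $\pi_1\ge\pi_2$ and (i) yields $\pi_1=\pi_2=3$; and (iii) gives $\pi_k\le2$ for all $k\ge3$, which together with $\pi_k\ge2$ yields $\pi_k=2$ for all $k\ge3$.

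For \emph{(i)}, the key observation is that the set of finite binary words containing no square of period at least $2$ is closed under taking prefixes, hence forms a subtree $T$ of the infinite rooted binary tree, of out-degree at most $2$. A $2$-coloring of an infinite path avoiding every square of period $\ge2$ would be precisely an infinite ray of $T$, so by compactness (K{\H o}nig's lemma) it suffices to show that $T$ is finite. I would do this by a finite backtracking search: start from a single letter, repeatedly append $0$ or $1$, and discard a word the moment one of its suffixes is a square of period $\ge2$; one verifies that every branch dies after a bounded number of steps. This proves $\pi_2\ge3$, and in particular recovers the classical $\pi_1\ge3$ of \cite{Thue06}. The step is routine, but note that its termination genuinely has to be checked, not assumed.

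For \emph{(ii)}, I would take the fixed point $\mathbf w$ of a square-free morphism on a three-letter alphabet (such a morphism was constructed by Thue~\cite{Thue06}) and show that $\mathbf w$ is square-free, so that $\pi_1\le3$. The standard argument has two ingredients: the morphism is \emph{synchronizing}, so that any occurrence of an image block inside $\mathbf w$ respects the block decomposition; and the images of all square-free words of length at most $3$ are themselves square-free. A short combinatorial lemma then lifts square-freeness from factors of bounded length to all of $\mathbf w$.

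The main obstacle is \emph{(iii)}: one must exhibit an explicit infinite \emph{binary} word with no square of period $\ge3$. The naive candidate is useless, since the Thue--Morse word contains the period-$3$ square $101101=(101)^2$, so a tailored construction is needed. I would seek such a word as the morphic image $\mu(\mathbf w)$ of a ternary square-free word $\mathbf w$ from \emph{(ii)} under a carefully chosen morphism $\mu$ into $\{0,1\}^*$ (or as the fixed point of a binary morphism), and prove the absence of long squares by the same template as in \emph{(ii)}: a lemma bounding the length of a shortest square of period $\ge3$ in a morphic image of a square-free word, together with a synchronization property of $\mu$, reduces the claim to a finite check. The real difficulty is to find a morphism for which this check actually succeeds, and above all to control the squares that straddle several $\mu$-blocks; this is exactly the content of the Entringer--Jackson--Schatz construction of \cite{EJS1974}. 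Granting \emph{(iii)}, the theorem follows.
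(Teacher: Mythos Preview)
The paper does not prove this theorem at all: it is stated as background, with the two items attributed directly to Thue~\cite{Thue06} and to Entringer--Jackson--Schatz~\cite{EJS1974}, and no argument is given. So there is nothing in the paper to compare your proposal against.

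That said, your outline is a faithful reconstruction of how those cited results are usually established. The monotonicity reduction to the three facts \emph{(i)}, \emph{(ii)}, \emph{(iii)} is correct; \emph{(ii)} is exactly Thue's theorem; \emph{(i)} is indeed a finite backtracking check (the tree of binary words with no square of period $\ge2$ is finite); and \emph{(iii)} is precisely the content of~\cite{EJS1974}, whose construction you describe accurately. Your sketch would stand on its own as a proof once the finite checks in \emph{(i)} and \emph{(iii)} are carried out, but for the purposes of this paper the theorem is simply quoted, not proved.
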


We settle the case of trees in Section~\ref{tree}.
\begin{theorem}\label{thm:tree}{\ }
\begin{itemize}
 \item $\pi_1(tree)=4$
 \item $\pi_k(tree)=3$ for $2\le k\le4$
 \item $\pi_k(tree)=2$ for $k\ge5$
\end{itemize}
\end{theorem}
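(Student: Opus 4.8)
The plan is to pin all the values down from the monotone chain $\pi_1(tree)\ge\pi_2(tree)\ge\pi_3(tree)\ge\cdots$ together with the known value $\pi_1(tree)=4$ (from \Cref{thm:nonr}), so that the whole theorem reduces to the three inequalities
$\pi_4(tree)\ge3$, $\pi_2(tree)\le3$, and $\pi_5(tree)\le2$; the bounds $\pi_k(tree)\ge2$ for all $k$ are immediate since a monochromatic path on $2k+1$ vertices is a square of period $k$. Indeed, $\pi_4(tree)\ge3$ forces $\pi_2(tree),\pi_3(tree),\pi_4(tree)\ge3$; $\pi_2(tree)\le3$ forces $\pi_j(tree)\le3$ for all $j\ge2$; and $\pi_5(tree)\le2$ forces $\pi_j(tree)\le2$ for all $j\ge5$.

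For the lower bound $\pi_4(tree)\ge3$, a single path does not suffice: by \Cref{thm:path} a path has a $2$-coloring with no square of period $\ge4$ (periods $1,2,3$ being allowed, as in $(0011)^\omega$). The leverage must come from a branch vertex: a path that enters and then leaves a subtree reads a palindromic block around that vertex, and long binary palindromes force squares of larger and larger period. I would therefore exhibit a concrete finite tree $T$ — a small "spider of paths" located by computer search — and verify, by inspecting all $2^{\abs{V(T)}}$ colorings, that every $2$-coloring of $T$ carries a square of period $\ge4$ on some path. The effort here is to make $T$ small enough for the finite check and to present it; no new combinatorial idea is needed beyond the "fold at a branch vertex" mechanism.

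For the two upper bounds I would use one scheme: root $T$ and color each $v$ by $h_{\mathrm{depth}(v)}$, where $h=h_0h_1h_2\cdots$ is a fixed word over $\{0,1,2\}$ (for $\pi_2$) resp.\ $\{0,1\}$ (for $\pi_5$). Any path of $T$ reads, for suitable $c\ge0$ and $a,b\ge0$, a \emph{valley word} $W=h_{c+a}\,h_{c+a-1}\cdots h_{c+1}\,h_c\,h_{c+1}\cdots h_{c+b}$. If $a=0$ or $b=0$ then $W$ is a factor of $h$ or the reverse of one, so it suffices that $h$ avoids squares of period $\ge2$ (resp.\ $\ge5$), this passing to reversed factors. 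For a genuine valley ($a,b\ge1$) I would analyse a putative square $W=UU$ by the position of the valley floor relative to the midpoint of $UU$: matching the two halves of $UU$ gives, on the longer arm, a shifted occurrence of a square inside $h$, and across the floor a palindromic relation inside $h$; combining these, a bounded case analysis should show that either $h$ contains a square of period $\ge2$ (resp.\ $\ge5$), or a palindromic factor of length $\ge3$ (resp.\ $\ge6$), or, in the few remaining alignments, a specific short "mixed" factor. It then suffices to pick $h$ avoiding all of these, which I would do by taking $h$ to be a fixed point of a well-chosen uniform morphism (found by computer) and certifying each avoidance property by the usual desubstitution argument, i.e.\ reducing it to a finite check on the images of the morphism.

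The hard part is precisely this: producing $h$ and proving the valley lemma. Over such small alphabets the constraints genuinely compete — for $\pi_2$, $h$ must avoid both all squares of period $\ge2$ and all palindromes of length $\ge3$ over only three letters, which is why the third color is unavoidable there; for $\pi_5$ the weaker threshold leaves enough slack (periods $1,2,3,4$ and palindromes up to length $5$ are harmless) to manage with two colors. I expect the bulk of the write-up to be the bookkeeping in the valley lemma and the verification that the chosen morphism threads every constraint simultaneously.
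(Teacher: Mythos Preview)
Your reduction and the level-coloring scheme are exactly what the paper does, and your ``valley word'' is the paper's $fsf^R$ (it reads the branch toward the root, so a path in the tree becomes $fs$ followed by the reversal $f^R$). The substantive difference is in the combinatorial property you ask of $h$. Instead of ``no square of period $\ge k$'' plus ``no palindrome of length $\ge\ell$'' plus ad hoc extras, the paper uses two sharper notions: the word is required to be \emph{$d$-directed} (no factor of length $d$ has its reversal also occurring as a factor --- strictly stronger than palindrome-freeness) and to be $(\beta^+,n)$-free for a suitable $\beta<2$ (so not merely square-free but free of repetitions of exponent $>\beta$). The second condition guarantees that in any putative square $UU$ inside $fsf^R$ with large period $p$, the part of the square lying in $sf^R$ has length at least $(2-\beta)p\ge d$; calling that part $x$, it is a factor of both $sf^R=(fs)^R$ and (by the square) of $fs$, so $fs$ contains both $x$ and $x^R$, contradicting $d$-directedness. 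This replaces your ``bounded case analysis'' by a one-line contradiction and makes the residual finite check (periods below the threshold where $(2-\beta)p\ge d$ fails) genuinely finite and small. Concretely the paper exhibits $g_2$ (a $12$-uniform ternary morphism, image $3$-directed and $\paren{\tfrac{19}{10}^+,2}$-free) and $g_5$ (a $21$-uniform binary morphism, image $20$-directed and $\paren{\tfrac{83}{42}^+,5}$-free) applied to any $\paren{\tfrac{7}{4}^+}$-free ternary word. Your palindrome route can be made to work for $\pi_2$ (the constraints really do force a length-$3$ palindrome in $h$), but for $\pi_5$ the small-$a$ alignments only force a length-$3$ palindrome directly, so you would indeed need the extra ``mixed'' factors you allude to; the paper's $d$-directedness absorbs all of that cleanly. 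Finally, the paper does not spell out the lower bound $\pi_4(tree)\ge 3$; your plan of exhibiting a small finite tree and checking all $2$-colorings is the natural way to fill that in.
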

We also obtain a lower bound for planar graphs in Section~\ref{planar}.
\begin{theorem}\label{thm:planar}
for every fixed $k$, $\pi_k(planar\cap tw_3)\ge11$.
\end{theorem}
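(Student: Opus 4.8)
The plan is to bootstrap from the inequality $\pi(planar\cap tw_3)\ge11$ of Theorem~\ref{thm:nonr} by a ``stretching'' construction. Observe first that any graph witnessing $\pi_k\ge11$ must depend on $k$: in a finite graph on $n$ vertices every square has period at most $n/2$, so $\pi_k$ of a fixed graph drops to its chromatic number once $k>n/2$. So, for each $k$, I would fix a planar graph $H$ of treewidth at most $3$ with $\pi(H)\ge11$ --- and I may take $H$ with arbitrarily many edges, since gluing many copies of a single witness at a common vertex preserves planarity, the treewidth bound, and the Thue number --- and then build $G=G_k$ from $H$ by subdividing every edge into a path with $L=L(k)$ new internal vertices, possibly decorating each new vertex with a small gadget. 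Subdivision and bounded decorations keep the treewidth at $\le\max(tw(H),2)=3$ and are compatible with a fixed planar embedding of $H$, so $G\in planar\cap tw_3$.

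The reduction would run as follows. Given a $10$-colouring $c$ of $G$, restrict it to the original vertices to obtain a $10$-colouring $c_0$ of $H$; since $\pi(H)\ge11$, $c_0$ contains a square on some path $v_1\cdots v_{2\ell}$ of $H$. The purpose of the subdivision is to make this square \emph{lift} to a genuine square of $c$ on the corresponding long path of $G$, of period a multiple of roughly $L$ and hence at least $k$; as this would hold for every $c$, it gives $\pi_k(G)\ge11$. Already at this stage two points need care: $L$ should be taken even and large, and when the square of $c_0$ has small period --- in the extreme, a monochromatic edge of $H$, which $\pi_k$ for $k\ge2$ does not forbid --- the construction must still force a long square in $G$, e.g. by retaining enough parallel connections that such a coincidence propagates.

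The step I expect to absorb essentially all of the work is exactly this lifting, against an adversarial colouring. For a plain subdivision it simply fails: the colours $c$ puts on the $L$ new vertices of an edge are chosen by the adversary, so it can give the subdivided edges along $v_1\cdots v_{2\ell}$ pairwise distinct patterns, and a repetition of $c_0$ on the original vertices then yields no square in $G$ at all. Since non-repetitive colouring has no literal equality gadget, the only handle is finiteness: a subdivided edge carries only $10^{O(L)}$ possible patterns, so once $H$ has many more edges than that, pigeonhole forces coincidences among patterns --- but the coincidences a naive pigeonhole produces are palindromic, not square, and it is delicate to route them so that two equal segments line up co-oriented along one path, and so that this interlocks with whatever argument underlies the bound $\pi(planar\cap tw_3)\ge11$ of \cite{DFJW,DEJWW}. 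In practice the cleanest route is probably not to use that bound as a black box but to re-run its proof on the stretched graph with the period-$\ge k$ restriction built in from the outset; either way the difficulty is concentrated in this lifting step, while planarity, the treewidth bound, and the outer shape of the reduction are routine.
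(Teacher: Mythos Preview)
Your proposal correctly isolates the real difficulty but does not close it, and the paper's route is genuinely different from the subdivision scheme you outline. The obstruction you already flag is fatal for the plan as written: once you subdivide, the adversary colours each new segment freely, so a square in $c_0$ on $H$ need not lift to anything in $G$; your pigeonhole idea only forces \emph{some} two segments to share a pattern, with no control over where they sit, their orientation, or whether they align with a square of $c_0$. Decorating the subdivision vertices does not help, since any bounded gadget admits a proper $10$-colouring and the adversary may reuse a single such colouring everywhere, making all decorations identical and thus useless as tie-breakers. In short, there is no mechanism in the construction that couples the colouring of one stretched edge to that of another, and without such coupling the reduction collapses.

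The paper avoids subdivision entirely. It works inside the nested family $G_i$ of stacked (Apollonian) triangulations, where $G_{i+1}$ is obtained from $G_i$ by inserting a degree-$3$ vertex in every face. Two structural facts are used: every edge of $G_i$ dominates a long path of fresh vertices in $G_{i+t}$, and every vertex of $G_i$ dominates a large outerplanar graph in a deeper $G_j$. The argument is then a \emph{period-doubling} contradiction: let $q$ be the largest period one can guarantee in every $10$-colouring of every $G_i$; take a square $0\cdots 1\,0\cdots 1$ of period $q$, use the path dominated by its central edge to find three new colours $2,3,4$ squeezed between the two halves, and use the outerplanar graph dominated by its right endpoint (together with the separate lemma that such graphs need $6$ colours even when only long squares are forbidden) to show the next letter after the square must be $0$. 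This turns the square into an overlap of period $q$; iterating $2q$ times yields a fourth power of period $q$, i.e.\ a square of period $2q$, contradicting maximality of $q$. So the ``lifting'' you were missing is replaced by an \emph{extension} argument that grows a given square one letter at a time using the local richness of the Apollonian structure, rather than trying to stretch the underlying graph.
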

This disproves a conjecture of Grytczuk~\cite{grytczuk} that $\pi_k(planar)=4$ for some~$k$.

\section{Trees}\label{tree}
Colorings of trees have been considered~\cite{OchVas12} that minimize
the critical exponent of repetitions.
To avoid large squares, and for the same reasons as in~\cite{OchVas12},
we can assume without loss of generality that our colored tree is rooted
and that all the vertices at the same distance to the root have the same color.
So we only need to describe the word $w$ lying on one branch of the tree.
We adopt the counter-intuitive convention that the reading direction of $w$ goes towards the root.
Then every factor $fs$ of $w$ with $|s|=1$ should be such that $fsf^R$ (where $f^R$ is the reverse of $f$)
avoids the forbidden large squares.
Let $w_3$ be any infinite $\paren{\tfrac74^+}$-free ternary word.

We obtain $w$ by taking the image of any $\paren{\tfrac74^+}$-free ternary word by the following morphisms.

We use the $12$-uniform morphism $g_2$ to prove $\pi_2(tree)\le3$ and the $21$-uniform morphism $g_5$ to prove $\pi_5(tree)\le2$:

\begin{minipage}[b]{0.4\linewidth}
$$\begin{array}{c}
 g_2(\texttt{0})=\texttt{011220012201}\\
 g_2(\texttt{1})=\texttt{122001120012}\\
 g_2(\texttt{2})=\texttt{200112201120}\\ 
\end{array}$$
\end{minipage}
\begin{minipage}[b]{0.5\linewidth}
$$\begin{array}{c}
 g_5(\texttt{0})=\texttt{001101110001010110010}\\
 g_5(\texttt{1})=\texttt{001101110001001110101}\\
 g_5(\texttt{2})=\texttt{001101110001001101010}\\ 
\end{array}$$
\end{minipage}

\medskip
\noindent
A word $u$ is \emph{$d$-directed} if for every factor $f$ of $u$ of length $d$, the word $f^R$ is not a factor of~$u$.
To prove that a word is $d$-directed, it suffices to check its factors of length~$d$.
A word is $(\beta^+,n)$-free if it contains no repetition with exponent strictly greater than $\beta$ and period at least~$n$.
To prove the $(\beta^+,n)$-freeness, we use the method described in~\cite{Ochem2004}. This way, we obtain the following.
\begin{itemize}
 \item $g_2(w_3)$ is $3$-directed and $\paren{\tfrac{19}{10}^+,2}$-free.
 \item $g_5(w_3)$ is $20$-directed and $\paren{\tfrac{83}{42}^+,5}$-free.
\end{itemize}

Consider $g_2(w_3)$. For contradiction, suppose that $g_2(w_3)$ contains a factor $fs$ with $|s|=1$ 
such that the word $fsf^R$ contains a square of period $p\ge20$.
Since $fsf^R$ is a palindrome, we can assume that the center of the square is on the left of $s$.
Since $fs$ is $\paren{\tfrac{19}{10}^+,2}$-free, $fs$ must contain (as a suffix) a prefix of this square of length at least $p+1$
and at most $\tfrac{19}{10}p$. So $sf^R$ must contain (as a prefix) a suffix $x$ of this square of length at most $p$
and at least $\tfrac{1}{10}p+1\ge3$. Because of the square, $x$ appears both in $fs$ and $sf^R$.
Notice that $(fs)^R=sf^R$, so that $fs$ contains both $x$ and $x^R$.
This is a contradiction since $|x|\ge3$ and $fs$ is $3$-directed.
Finally, an exhaustive computer check shows that the words $fsf^R$ contain no square of period $p$ with $2\le p\le19$.

The proof for $g_5(w_3)$ is similar.

\section{Planar graphs}\label{planar}
We start with helpful results.
\begin{lemma}\label{plus4}
$\pi(planar\cap tw_3)\ge\pi(outerplanar)+4$
\end{lemma}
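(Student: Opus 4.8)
The plan is to prove the lemma by an \emph{amplification} argument that turns the outerplanar lower bound into a planar, treewidth\nobreakdash-$3$ one: start from an outerplanar graph that already forces $\pi(outerplanar)$ colours, attach to it a planar gadget that keeps the treewidth equal to $3$, and show that the gadget forces four further colours which cannot be reused on the outerplanar part.

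First I would fix an outerplanar graph $H$ with $\pi(H)=\pi(outerplanar)=:t$; such an $H$ exists since $\pi(outerplanar)\le 12$ is finite, so the supremum is attained. I may assume that $H$ is a maximal outerplanar graph, with outer Hamiltonian cycle $v_1v_2\cdots v_m$, because adding edges keeps a graph outerplanar and does not decrease $\pi$. Next I would build a planar graph $G\supseteq H$ with $tw(G)\le 3$ by gluing a ``forcing gadget'' $\Gamma$ into the outer face of $H$ along (parts of) the cycle $v_1\cdots v_m$; keeping $tw(G)\le 3$ is possible if $\Gamma$ is assembled from small pieces stacked near the edges $v_iv_{i+1}$, so that a width\nobreakdash-$2$ tree decomposition of $H$ extends to a width\nobreakdash-$3$ decomposition of $G$. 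The property I would engineer for $\Gamma$ is the following: in every non\nobreakdash-repetitive colouring $c$ of $G$ there are four colours — possibly depending on $c$ — that do not occur on $V(H)$; if a single small module does not suffice, one takes many translated copies of it and argues by pigeonhole over the boundedly many local colour patterns.

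Granting this property, the conclusion is immediate. If $\pi(G)\le t+3$, take a non\nobreakdash-repetitive colouring of $G$ with at most $t+3$ colours; since $H\subseteq G$, its restriction to $H$ is still non\nobreakdash-repetitive, and by the blocking property it uses at most $(t+3)-4=t-1$ colours, contradicting $\pi(H)=t$. Hence $\pi(G)\ge t+4$, and as $G\in planar\cap tw_3$ this yields $\pi(planar\cap tw_3)\ge\pi(outerplanar)+4$.

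The crux — and the hard part — is the construction and verification of $\Gamma$. The simultaneous constraints ``planar'' and ``treewidth exactly $3$'' rule out the obvious devices: a universal vertex, or several apices, would either destroy planarity or push the treewidth to $4$, so the four colours have to be forced off $H$ by purely \emph{local} structure attached along its outer cycle. Proving that a chosen gadget really does block four colours in \emph{every} non\nobreakdash-repetitive colouring is a finite but delicate case analysis, and I expect it — like the $(\beta^+,n)$\nobreakdash-freeness and $d$\nobreakdash-directedness checks of Section~\ref{tree} — to be carried out by computer.
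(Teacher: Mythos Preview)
Your proposal has a genuine gap: the entire content of the lemma is the construction and analysis of the gadget, and you do not supply one. Everything you write after ``Granting this property'' is fine, but it is the easy half; the paper's proof is precisely a concrete $\Gamma$ together with a short hand argument that it forces four extra colours.

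More importantly, your intuition about which tools are available leads you away from the actual construction. You explicitly discard ``a universal vertex, or several apices'' as incompatible with planarity or treewidth~$3$, but the paper's proof uses \emph{both} of these devices, arranged so that neither constraint is violated. The construction is not a gadget glued along the outer cycle of a single copy of~$H$; instead one takes a large matching $a_1b_1,\dots,a_Nb_N$, adds two adjacent apices $c,d$ over the matching vertices (this part is planar and has treewidth~$3$), and then makes each matching vertex $x$ universal to its own private copy $H_x$ of~$H$. Since $H$ is outerplanar, $H_x$ together with the universal vertex $x$ is planar and has treewidth at most~$3$; the whole graph is assembled from these pieces and stays in $planar\cap tw_3$. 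A pigeonhole on the $N$ matching edges produces two edges with the same colour pair, say $m(a_1)=m(a_2)=1$, $m(b_1)=m(b_2)=2$, with $m(c)=3$, $m(d)=4$. Paths of length four through $c$ or $d$ now forbid colours $1,3,4$ on $H_{a_1}$, so $H_{a_1}$ sees only $\pi(H)$ colours and hence uses all of them, in particular colour~$2$; symmetrically $H_{b_1}$ contains colour~$1$, and a path of length four through $a_1b_1$ yields a square. No computer check is needed.

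So the missing idea is: do not try to block four colours off a fixed copy of $H$ by local structure on its boundary; rather, hang many copies of $H$ off an apex-over-matching skeleton and let short squares through the apices do the blocking.
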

\begin{proof}
Let $H$ be an outerplanar graph such that $\pi(H)=\pi(outerplanar)$.
We construct a planar graph $G$ as follows.
We start with a matching $a_1b_1,a_2b_2,\cdots,a_{1000}b_{1000}$.
For every vertex $x$ in this matching, we add a copy $H_x$ of $H$ and we make $x$ adjacent to every vertex of $H_x$.
Finally we add two adjacent vertices $c$ and $d$ and we make them adjacent to every vertex in the matching.

For contradiction, suppose that $\pi(G)\le\pi(H)+3$ and let $m$ be a nonrepetitive coloring of $G$ using the alphabet $A=\acc{1,2,\cdots,\pi(H)+3}$.
Since $\pi(H)+3\le15$, two edges of the matching get the same colors.
Without loss of generality, $m(a_1)=m(a_2)=1$ and $m(b_1)=m(b_2)=2$. We can also assume that $m(c)=3$ and $m(d)=4$.
Let $u$ be a vertex in $H_{a_1}$. Because of the edge $ua_i$, $m(u)\ne1$. Also $m(u)\ne3$, since otherwise the path $ua_1ca_2$ would create the square $3131$.
Symmetrically, $m(u)\ne4$. Thus $m(H_{a_1})\subset A\setminus\acc{1,3,4}$.
Since $|A\setminus\acc{1,3,4}|=\pi(H)$, there exists a vertex $u$ in $H_{a_1}$ such that $m(u)=2$.
By swapping the roles of colors $1$ and $2$ in the argument above, we deduce that there exists a vertex $v$ in $H_{b_1}$ such that $m(u)=1$.
Now the path $ua_1b_1v$ creates the square $2121$, which is a contradiction.
\end{proof}

\begin{lemma}\label{path}
Let $k$ be a fixed integer and let $P$ be a path. In every proper coloring of $P$ avoiding squares of period at least $k$,
every subpath of $P$ with $4k$ vertices contains at least $3$ colors.
\end{lemma}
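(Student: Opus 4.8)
The plan is to argue by contradiction. Suppose that some subpath $Q$ of $P$ with $4k$ vertices uses at most two colors. The first step is to note that, since the coloring of $P$ is proper, consecutive vertices of $Q$ receive distinct colors; as only (at most) two colors occur on $Q$, the colors must strictly alternate along $Q$. A properly colored path on at least two vertices already uses at least two colors, so in fact exactly two colors $a\neq b$ occur, and the word spelled by the colors along $Q$ is, up to swapping $a$ and $b$ and up to the direction in which $Q$ is read, exactly $(ab)^{2k}$, a word of length $4k$.

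The key step is then to exhibit the forbidden square hiding in this word: $(ab)^{2k}$ is the square $uu$ with $u=(ab)^{k}$, whose period is $|u|=2k$. Since $k\ge 1$ we have $2k\ge k$, so this is a square of period at least $k$ appearing on a path of $P$, contradicting the hypothesis that the coloring avoids all squares of period at least $k$. This contradiction shows that every subpath of $P$ with $4k$ vertices must use at least three colors.

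I expect no real obstacle here; the lemma is essentially a translation of ``proper coloring using few colors'' into ``periodic word,'' and the only points requiring a word of care are the degenerate cases (a subpath on at least two vertices cannot be monochromatic under a proper coloring, so ``at most two colors'' genuinely forces the alternating pattern). I would also remark that the bound $4k$ is not tight: an alternating word contains no square of odd period, so already $2k+2$ vertices suffice to produce a square of even period at least $k$. Since the lemma only needs $4k$ and this choice keeps the witnessing square $((ab)^{k})^{2}$ maximally transparent, I would leave the statement as is.
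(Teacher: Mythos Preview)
Your proof is correct and follows exactly the same idea as the paper's: a properly $2$-colored path on $4k$ vertices is forced to be the alternating word $(ab)^{2k}$, which is a square of period $2k\ge k$. Your additional remark that $2k+2$ vertices already suffice (since alternating words have no squares of odd period) is also correct, though the paper does not comment on this.
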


\begin{proof}
A proper 2-coloring of the path of $4k$ vertices contains the square $(01)^{2k}$ of period~$2k$.
So at least $3$ colors are needed to avoid squares of period at least $k$.
\end{proof}

\begin{lemma}\label{lem:outer}
For every fixed $k$, there exists an outerplanar graph that admits no
proper 5-coloring avoiding squares of period at least $k$.
\end{lemma}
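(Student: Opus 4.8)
The plan is to force, in every proper $5$-colouring, some path of the graph to be $2$-coloured on $4k$ consecutive vertices, which contradicts Lemma~\ref{path} (equivalently: a proper $2$-colouring of a $4k$-vertex path is $(ab)^{2k}$, a square of period $2k\ge k$). The construction is layered. The outermost layer is a big fan: an apex $a$ joined to a long path $P=p_1p_2\cdots p_N$. In any proper colouring $a$ gets some colour $\gamma$, so $P$ is properly coloured with the remaining four colours, and since $\gamma$ occurs on no vertex of $P$ it occurs at most once on any path that uses $a$; thus, inside the fan, every square lies on $P$, and $P$ itself carries a proper $4$-colouring avoiding squares of period at least $k$. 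It then suffices to attach, on the spine side, an outerplanar gadget that squeezes the palette of a length-$4k$ segment of $P$ down from four colours to two.

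The engine for one such squeeze generalises the recolouring trick in the proof of Lemma~\ref{plus4}, where a single pivot vertex $c$ turns $u\,a_1\,c\,a_2$ into the period-$2$ square $3131$. Here the single pivot is replaced by an auxiliary path $R_p=r_1\cdots r_{2m-1}$ with $m\ge k$ hung off a spine vertex $p$, whose colouring is pinned — recursively, by further fans and auxiliary paths — to a prescribed word $W=v\cdot v'$ in which $v'$ is the length-$(m-1)$ prefix of the length-$m$ factor $v$, and $v$ is chosen with no small period. If $m(p)$ takes the forbidden value $W_m$, then the walk that runs along $R_p$ and continues into $P$ through $p$ is an induced path whose colour sequence is a square of period $m\ge k$ — a contradiction. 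Installing a battery of $\Theta(k)$ such gadgets, one per vertex, along a segment of $P$ of length at least $4k$, and iterating the whole step so the palette drops $5\to4\to3\to2$, forces that segment to use only two colours. The union of all gadgets is arranged as a single polygon triangulation, each fan glued to the previous one along one shared spine vertex, so the graph stays outerplanar; one then concludes as above.

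The main obstacle is the engine's self-reference. The auxiliary words must be pinned tightly enough that the intended square is genuinely forced, yet the many paths created by the gadgets must not themselves contain unintended squares of period at least $k$, and the whole assembly must remain outerplanar. Because outerplanarity forbids $K_{2,3}$, no spine vertex can be dominated by two independent apexes, so each colour removal has to be bootstrapped through a square-forcing gadget rather than obtained by brute apex addition; making this recursion terminate (the auxiliary words get short enough to pin directly) and keeping the bookkeeping of which squares are forced versus forbidden consistent is where the real work lies, and I would expect it to require a finite but delicate case analysis of the core gadget.
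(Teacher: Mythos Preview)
Your first move --- a fan with apex $a$ over a long path $P$, forcing $P$ into four colours --- is exactly the paper's level-$0$-to-level-$1$ step. The gap is in everything after that. You propose to drive the palette on a segment of $P$ down to two colours via a recursive ``pinning'' gadget, and you yourself flag that making the recursion terminate ``is where the real work lies.'' That is not a detail to be filled in later; it is the whole argument. You never explain how to pin even a single vertex to a prescribed colour using only outerplanar attachments and the period-$\ge k$ square constraint, and no base case is offered. To forbid one extra colour at a spine vertex $p$ by your square mechanism you already need an adjacent path of length at least $k-1$ whose colouring you control, which is the very problem you are trying to solve; as written the recursion is circular, and ``the auxiliary words get short enough to pin directly'' has no content, since once they are shorter than $k$ the square constraint exerts no force at all.

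The paper never attempts to reach a $2$-coloured segment. It simply iterates your fan step: every vertex on level $i$ is the apex of a fan over a very long path on level $i+1$, for $10k$ levels. The engine is pigeonhole, not a gadget. On the level-$1$ path (four colours) some long factor $w\texttt{1}$ occurs twice, non-intersecting; on the level-$2$ path hung below the rightmost such~$\texttt{1}$, colour $\texttt{0}$ is forbidden --- not by any attachment, but because a $\texttt{0}$ there completes the long square $w\texttt{10}w\texttt{10}$ routed through the root. Together with properness this leaves three colours on level~$2$; Lemma~\ref{path} guarantees each of them is recurrent, so the argument repeats. At every level exactly two colours are excluded (the parent's by properness, the grandparent's via the repeated-factor square), and among the three remaining colours one can always pick the next entry of the cycle $\texttt{0},\texttt{1},\texttt{2},\texttt{3},\texttt{4},\texttt{0},\ldots$ as the last letter of a twice-occurring long factor. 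The resulting root-to-leaf path is coloured $(\texttt{01234})^{2k}$, itself a square of period $5k\ge k$.

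So the idea you are missing is that nothing needs to be pinned: a very long path on a bounded alphabet automatically repeats long factors, and one repeated long factor together with the shared apex already manufactures the forbidden square one level down.
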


\begin{proof}
Our outerplanar graph $G$ has a root vertex and vertices at distance $i$ from the root are said to be on level $i$.
A vertex on level $i+1$ has exactly one neighbor on level $i$.
The neighborhood on level $i+1$ of vertex on level $i$ induces a very long path.
Finally, $G$ contains $10k$ levels.

For contradiction, suppose that $G$ has a proper 5-coloring avoiding squares of period at least $k$ using the colors $\acc{0,1,2,3,4}$.
Without loss of generality, the root (on level~$0$) is colored~$0$.
Without loss of generality, the very long path on level 1 contains two non-intersecting occurrences of a long factor of the form $w1$.
Now we consider the very long path on level 2 adjacent to the suffix letter~$1$ of the rightmost occurrence of $w1$.
It does not contain color $0$, since otherwise we would have the long square $w10w10$ such that the first $0$ is the root and the second $0$ is on level 2.
So it must be colored with the remaining colors $\acc{2,3,4}$.
By Lemma~\ref{path}, each of the three colors in $\acc{2,3,4}$ are recurrent in our very long path.
In particular, it contains two non-intersecting occurrences of a long factor of the form $z2$.
On level 3, below the suffix letter $2$ of the rightmost occurrence of $z2$, the very long path does not contain color $2$,
since otherwise we would have the long square $z21z21$.
So this very long path on level $3$ must be colored with letters $\acc{0,3,4}$.

Continuing this line of reasonning leads to the existence of a downward path from the root such that the vertex on level $i$ is colored $i\pmod{5}$.
So $G$ contains the long square $(01234)^{2k}$.
\end{proof}

We define the sequence $G_i$ of planar triangulations with treewidth $3$ such that
$G_0$ is $K_4$ and $G_{i+1}$ is obtained by adding a vertex of degree $3$ in every face of $G_i$.

let us give some properties of the sequence of $G_i$.
\begin{lemma}\label{proper}
If $xy$ is an edge of $G_i$, then $x$ and $y$ are adjacent to every
vertex of a path on~$t$ vertices in $G_{i+t}$.
Moreover, $G_i$ and this path have an empty intersection.
\end{lemma}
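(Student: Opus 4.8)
The plan is to induct on $t$. For the base case $t=1$, note that an edge $xy$ of $G_i$ lies in exactly two faces of $G_i$, say $xyz$ and $xyz'$. When we form $G_{i+1}$, each of these two faces receives a new degree-$3$ vertex; let $v$ be the new vertex placed in the face $xyz$. Then $v$ is adjacent to $x$, $y$, and $z$, so in particular $v$ is adjacent to both $x$ and $y$. A single vertex is a path on $1$ vertex, and $v\notin G_i$ since it was added in passing from $G_i$ to $G_{i+1}$, so $G_i$ and $\{v\}$ are disjoint. This settles $t=1$.

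For the inductive step, suppose the claim holds for $t$: given the edge $xy$ of $G_i$, there is a path $P=p_1p_2\cdots p_t$ in $G_{i+t}$ with every $p_j$ adjacent to both $x$ and $y$, and $P\cap G_i=\emptyset$. The key observation is that for each $j$, the triangle $xyp_j$ is a face of $G_{i+t}$ (this is how these vertices arise — each $p_j$ was inserted into a face bounded on one side by the edge $xy$), and consecutive vertices $p_j,p_{j+1}$ together with $x$ and $y$ bound two adjacent faces $xyp_j$ and $xyp_{j+1}$ sharing the edge $xy$. Now pass from $G_{i+t}$ to $G_{i+t+1}$: in the face $xyp_j$ we insert a new vertex $q_j$ adjacent to $x$, $y$, and $p_j$. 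This gives $t$ new vertices $q_1,\dots,q_t$, each adjacent to both $x$ and $y$. Moreover $q_j$ is adjacent to $q_{j+1}$: indeed, $p_jp_{j+1}$ is an edge of $G_{i+t}$ (consecutive vertices of the path $P$), and I claim $p_jp_{j+1}$ lies on a common face with... — more carefully, the right bookkeeping is that $q_j$ and $q_{j+1}$ are both adjacent to the edge $xy$ region and to the path; I would verify directly from the planar embedding that $q_1\cdots q_t$ forms a path on $t$ vertices. Actually the cleanest route is: apply the $t=1$ case to the edge $xp_j$ or, better, observe that the faces $xyp_1,\dots,xyp_t$ occur consecutively around the edge $xy$ in the embedding, so the inserted vertices $q_1,\dots,q_t$ are consecutive along a path, and we may also prepend/append one more vertex coming from the two faces of $G_{i+t}$ at the two ends bounded by $xy$ but outside $P$, yielding a path on $t+1$ vertices. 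All these new vertices lie in $G_{i+t+1}\setminus G_{i+t}\subseteq G_{i+t+1}\setminus G_i$, so the disjointness from $G_i$ is preserved.

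The main obstacle is the combinatorial bookkeeping in the inductive step: one must be careful about \emph{which} faces around the edge $xy$ in $G_{i+t}$ have their new vertices forming a path, and about getting the count exactly $t+1$ rather than $t-1$ or $t+2$. Concretely, the edge $xy$ is incident to exactly two faces in every $G_{i+t}$ (the triangulation property with treewidth $3$ means the neighborhood structure is very rigid), and the $t$ vertices $p_1,\dots,p_t$ are \emph{not} all on faces incident to $xy$ simultaneously — rather, the path $P$ grows because at each stage the edges $xp_j$ (or $yp_j$) spawn new neighbors. So the correct invariant to carry through the induction may need to be slightly strengthened, e.g. tracking that $P$ is precisely the set of common neighbors of $x$ and $y$ in $G_{i+t}$ that were created after stage $i$, together with the cyclic order in which they appear around $x$. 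Once that strengthened statement is in place, the inductive step is a routine check of the local face structure when inserting degree-$3$ vertices, and the disjointness clause is immediate since every relevant vertex is strictly newer than $G_i$.
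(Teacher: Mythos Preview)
The paper does not actually supply a proof of this lemma; it is stated as a structural property of the~$G_i$ and left to the reader. So there is nothing to compare your attempt against, and the question becomes whether your argument stands on its own.

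Your base case is fine, but the inductive step contains a genuine error. You assert that ``for each $j$, the triangle $xyp_j$ is a face of $G_{i+t}$'', and later that ``the faces $xyp_1,\dots,xyp_t$ occur consecutively around the edge $xy$''. This is false: in any planar triangulation the edge $xy$ lies on exactly \emph{two} faces, not $t$ of them. In $G_{i+t}$ only the triangle $xyp_t$ (and its counterpart on the other side of $xy$) is a face; each earlier triangle $xyp_j$ with $j<t$ was a face of $G_{i+j}$ but has since been subdivided. You recognise this in your last paragraph, yet you do not actually repair the argument --- you only say the invariant ``may need to be slightly strengthened'' and that the step is then ``a routine check''. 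That is not a proof.

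The clean fix is to strengthen the inductive hypothesis exactly as follows: in $G_{i+t}$ there is a path $p_1p_2\cdots p_t$ disjoint from $G_i$, each $p_j$ adjacent to both $x$ and $y$, \emph{and the face of $G_{i+t}$ incident to $xy$ on the chosen side is $xyp_t$}. Then passing to $G_{i+t+1}$ inserts a single new vertex $p_{t+1}$ into that face, adjacent to $x$, $y$, and $p_t$; this extends the path by one vertex and makes $xyp_{t+1}$ the new face incident to $xy$. That is the whole induction, and it avoids the incorrect picture of many faces crowding around $xy$.
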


We define the sequence $U_i$ of outerplanar graphs as follows:
\begin{itemize}
 \item every graph $U_i$ has exactly one main edge.
 \item $U_0$ is $K_2$.
 \item $U_{i+1}$ is obtained from a copy of $U_i$ with main edge $ab$ and a copy of $U_i$ with main edge $cd$ by identifying the vertices $b$ and $c$
 and adding the edge $ad$. Then $ad$ is the main edge of $U_{i+1}$.
\end{itemize}
It is not hard to check that every outerplanar graph is the subgraph of some graph $U_i$.
\begin{lemma}\label{outer}
If $x$ is a vertex of $G_i$, then $x$ is adjacent to every vertex of a copy of $U_t$ in $G_{i+t+2}$.
Moreover, $G_i$ and this copy of $U_t$ have an empty intersection.
\end{lemma}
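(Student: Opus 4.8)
\emph{The plan is to prove Lemma~\ref{outer} by induction on $t$ from a strengthened statement that also records \emph{where} the main edge of the produced copy of $U_t$ lies.} Controlling the main edge is essential: merging two copies of $U_t$ into a copy of $U_{t+1}$ means identifying a prescribed vertex of one copy with a prescribed vertex of the other and adding an edge between two further prescribed vertices, and for $t\ge 2$ the graph $U_t$ is rigid (its automorphism group has order~$2$, swapping the two copies of $U_{t-1}$, and it fixes the main edge), so every subgraph isomorphic to $U_t$ has a canonical main edge that one must keep track of. For a face $F$ of a triangulation, write $v_F$ for the degree-$3$ vertex stacked inside $F$ at the next step of the sequence. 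Given a face $xyz$ of $G_i$ and putting $v:=v_{xyz}$, the strengthened claim would assert: there is a copy of $U_t$ in $G_{i+t+2}$ lying inside the part of the triangulation that refines face $xyz$, disjoint from $V(G_i)$, with every vertex adjacent to~$x$, and whose main edge together with the neighbouring ``hub'' vertex are prescribed stacked vertices lying near the edge $xv$ (the hub being a common neighbour of $x$ and $v$ outside $G_i$, destined to become the identified vertex at the next step). Lemma~\ref{outer} follows by forgetting this extra data, since every vertex of $G_i$ lies on a face of $G_i$.

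The base case $t=0$ is immediate: with $v=v_{xyz}$ and $w=v_{xvy}$ we have $v\in G_{i+1}$ and $w\in G_{i+2}$, the edge $vw$ is present, both ends are adjacent to $x$ (indeed to $y$) and lie outside $V(G_i)$, and this is the required copy of $U_0=K_2$ in $G_{i+0+2}$. For the inductive step, assume the claim for $t$ and take a face $xyz$ of $G_i$; add $v=v_{xyz}$, so the faces $xvy$ and $xvz$ appear in $G_{i+1}$. One obtains the two copies of $U_t$ by invoking the hypothesis on two suitable faces --- one of $G_i$ (giving a copy in $G_{i+t+2}\subseteq G_{i+t+3}$) and one of $G_{i+1}$ incident to~$x$ (giving a copy in $G_{(i+1)+t+2}=G_{i+t+3}$). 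One then checks that the prescribed main-edge endpoints can be matched at a single stacked vertex $m$ near $xv$, that the two copies meet only in $m$, that the required closing edge between the two other prescribed vertices is present as an edge created by a stacking operation, and that every vertex of the resulting graph is adjacent to $x$ and lies outside $V(G_i)$. The union of the two copies together with the closing edge is then a copy of $U_{t+1}$ in $G_{i+(t+1)+2}$ with the prescribed main edge and hub.

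The real difficulty is pinning down the strengthened hypothesis, i.e.\ giving the correct recursive description of which stacked vertices near $xv$ form the main edge and the hub, so that \emph{simultaneously} the base case holds, the two copies supplied by the hypothesis glue at exactly one vertex into $U_{t+1}$ with no other overlap, and everything stays adjacent to $x$ and disjoint from $G_i$. A secondary point that must be handled is the depth count: the ``$+2$'' in $G_{i+t+2}$ is already forced at $t=0$, since two vertices stacked at the same step are never adjacent at the very next step, so $U_0$ cannot occur before $G_{i+2}$; and in the inductive step one must verify that invoking the hypothesis only at levels $i$ and $i+1$ (not deeper) suffices to stay inside $G_{i+(t+1)+2}$. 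This argument is the ``fan along an edge'' refinement of Lemma~\ref{proper} --- which is the same statement with the maximal outerplanar graphs $U_t$ replaced by paths --- carried out through the same style of stacking.
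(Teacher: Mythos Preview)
The paper states Lemma~\ref{outer} (like Lemma~\ref{proper}) without proof, so there is no argument in the paper to compare yours against.

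Your plan --- induction on $t$ with an invariant locating the main edge --- is the right shape, and the base case is correct. But what you have written is not a proof: you say outright that ``the real difficulty is pinning down the strengthened hypothesis'' and then do not pin it down, so the three key checks in the inductive step (that the two copies of $U_t$ meet in exactly one vertex, that this vertex is an endpoint of both main edges, and that the two remaining endpoints are adjacent in $G_{i+t+3}$) are bare assertions. Your suggestion to pull the two copies from different depths (a face of $G_i$ and a face of $G_{i+1}$) also makes the shared vertex hard to arrange, since the natural ``anchor'' vertices produced at different depths are distinct stacked vertices. One invariant that does work cleanly is: \emph{for an edge $xv$ of $G_j$ lying on a face $xva$, with $w:=v_{xva}$, there is a copy of $U_t$ in $G_{j+t+1}$, every vertex adjacent to $x$, with main edge $vw$, and contained in $\{v,w\}\cup\operatorname{int}(xvw)$.} For the step, apply this twice at level $j+1$ to the \emph{same} face $xvw$, once along the edge $xv$ and once along $xw$; both calls return $u:=v_{xvw}$ as the far endpoint of the main edge, the two copies lie in $\{v,u\}\cup\operatorname{int}(xuv)$ and $\{w,u\}\cup\operatorname{int}(xuw)$ respectively and hence meet only in $u$, and the closing edge $vw$ is already present. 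Lemma~\ref{outer} then follows by taking $j=i+1$ and $v=v_{xya}$ for any face $xya$ of $G_i$.
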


We are ready to prove Theorem~\ref{thm:planar}.
For contradiction, suppose that $q$ is the smallest integer such that all the $G_i$'s
can be colored with $10$ colors such that the period of every square is at most $q$.
By Lemma~\ref{proper}, we can assume that the $10$-coloring is proper and we use the alphabet $A_{10}=\acc{0,\cdots,9}$.

Recall the $\pi(outerplanar)\ge7$~\cite{bv:2007}.
Using Lemma~\ref{plus4}, we obtain $\pi(planar\cap tw_3)\ge11$.
Let $W$ be a witness of this lower bound.
Then every proper $10$-coloring of a graph $G_i$ that contains $W$
must contain a square with period at least 2. This shows that $q\ge2$.

By definiton of $q$, there exists $j$ such that every proper $10$-coloring of $G_j$ contains a square of period $q$.
By permuting the colors, we can assume that this square is $S=0\cdots10\cdots1$ where $|0\cdots1|=q$.
By Lemma~\ref{proper} and Lemma~\ref{path}, there exists $a$ such that in $G_{j+a}$, the endpoints of the edge colored $10$ at the center of $S$
are adjacent to a path containing $3$ different colors. These $3$ colors are distinct from $0$ and $1$.
So without loss of generality, $G_{j+a}$ contains the three words of the form $F_x=0\cdots1x0\cdots1$ with $x\in\acc{2,3,4}$.

By Lemma~\ref{outer} and Lemma~\ref{lem:outer}, there exists $b$ such that in $G_{j+a+b}$, the vertex colored~$1$ that is the common suffix of the words $F_x$
is adjacent to an outerplanar containing $6$ different colors. Let $Q_6\subset A_{10}$ be the set of these $6$ colors.
First, $1\not\in Q_6$ because the coloring is proper. If there exists $x\in Q_6\cap\acc{2,3,4}$, then $G_{j+a+b}$ would contain the square $F_xx=0\cdots1x0\cdots1x$
with period $q+1$. That would contradict the minimality of $q$, so $Q_6\cap\acc{2,3,4}=\emptyset$.
Therefore, $Q_6=\acc{0,5,6,7,8,9}$.
Since $0\in Q_6$, $G_{j+a+b}$ contains the overlap $0\cdots10\cdots10$.

We have seen that the square with period $q$ and length $2q$ in $G_{j+a}$ extends to one letter to right in $G_{j+a+b}$ to give the overlap with period $q$ and length $2q+1$.
Similarly, the suffix of length $2q$ of this overlap in $G_{j+a+b}$ is again a square that extends to an overlap in $G_{j+2(a+b)}$.
So $G_{j+2(a+b)}$ contains a repetition with period $q$ and length $2q+2$. By induction, $G_{j+2q(a+b)}$ contains a repetition with period $q$ and length $4q$.
This repetition in $G_{j+2q(a+b)}$ is a fourth power with period $q$ and thus a square with period $2q$.
Finally, this contradicts the minimality of $q$. 


\end{document}